\documentclass{amsart}
\makeatletter
\def\LaTeX{\leavevmode L\raise.42ex
   \hbox{\kern-.3em\size{\sf@size}{0pt}\selectfont A}\kern-.15em\TeX}
\makeatother

\newcommand{\BibTeX}{{\rm B\kern-.05em{\sc
i\kern-.025emb}\kern-.08em\TeX}}
\newtheorem{corollary}{Corollary}[section]
\newtheorem{theorem}{Theorem}[section]
\newtheorem{lemma}{Lemma}

\theoremstyle{definition}
\newtheorem{definition}{Definition}

\numberwithin{equation}{section}

\begin{document}

\title[ Besov vectors and Paley-Wiener vectors in Hilbert spaces ]{Approximation of Besov vectors by Paley-Wiener vectors  in Hilbert spaces}

\keywords{Self-adjoint operator, Paley-Wiener vectors, $K$-functor, Schrodinger goup of operators, Besov norms}
\subjclass{43A80; Secondary 41A15, 41A17}

\maketitle

\begin{center}

\author{Isaac Z. Pesenson }\footnote{ Department of Mathematics, Temple University,
 Philadelphia,
PA 19122; pesenson@temple.edu. The author was supported in
part by the National Geospatial-Intelligence Agency University
Research Initiative (NURI), grant HM1582-08-1-0019. }

\author{Meyer Z. Pesenson }\footnote{ 
CMS Department, California Institute of Technology, MC 305-16, Pasadena, CA 91125; mzp@cms.caltech.edu.
The author was supported in part by the National Geospatial-Intelligence Agency University Research Initiative (NURI), grant HM1582-08-1-0019 and by AFOSR, MURI, Award FA9550-09-1-0643  }

\end{center}

  \begin{abstract}  
We develop an approximation theory in Hilbert spaces that generalizes the classical theory of approximation by entire functions of exponential type. 
The results advance harmonic analysis on manifolds and graphs, thus facilitating data representation, compression, denoising and visualization.
These tasks are of great importance to machine learning, complex data analysis and computer vision.

  \end{abstract}

 \section{Introduction}
 
 One of the main themes in Analysis is correlation between frequency content of a function and its smoothness. In the classical approach the frequency is understood in terms of the Fourier transform (or Fourier series) and smoothness is described in terms of the Sobolev or Lipshitz and Besov norms. For these notions it is well understood \cite{Akh}, \cite{N} that there exists a perfect balance between the rate of approximation by bandlimited functions ( trigonometric polynomials) and smoothness described by Besov norms. For more recent results  of approximations by entire functions of exponential type we refer to \cite{G1}-\cite{G3}.
 
 The classical concepts and result were generalized to  Riemannian manifolds, graphs,  unitary representations of Lie groups and integral transforms in our work \cite{Pes01}-\cite{Pes14}, \cite{KPes}. 
 
  The goal of the present article is to  develop a form of a Harmonic Analysis  which holds true in general Hilbert spaces. In Introduction we formulate  main results obtained
in the paper. The exact definitions of all notions are given in
the text.

We start with a   self-adjoint positive definite operator $L$   in  a Hilbert space $\mathcal{H}$ and consider its positive root
$D=L^{1/2}$. For the operator $D$ one can introduce notion of the
Spectral  Transform $\mathcal{F}_{D}$ which is an
isomorphism between $\mathcal{H}$ and a direct integral of Hilbert
spaces over $\mathbb{R}$.

A Paley-Wiener space $PW_{\omega}(D), \omega>0$, is introduced as
the set of all $f\in \mathcal{H}$ whose image $\mathcal{F}_{D}f$
 has support in $[0,\omega]$. In the
case when $\mathcal{H}=L_{2}(\mathbb{R})^{d} $  and $D$ is a positive square
root from the Laplace operator our definition produces regular
Paley-Wiener spaces of  spherical exponential type.

The domain $\mathcal{D}_{s}, s\in \mathbb{R},$ of the operator $D^{s}, s\in \mathbb{R},$ plays the role of the
Sobolev space and we introduce Besov spaces
$\mathbf{B}^{\alpha}_{2,q}=\mathbf{B}^{\alpha}_{2,q}(D), \alpha>0, 1\leq  q\leq \infty,$
by using Peetre's interpolation $K$-functor \cite{BL}, \cite{BB},
\cite{KPes}, \cite{KPS}, \cite{Tr3}.
\begin{equation}
\mathbf{B}^{\alpha}_{2,q}(D)=\left(\mathcal{H},\mathcal{D}_{r/2}\right)^{K}_{\alpha/r,q},\label{Besov
norm}
\end{equation}
where $r$ can be any natural such that $0<\alpha<r, 1\leq
q<\infty$, or  $0\leq\alpha\leq r,q= \infty$.  It is crucial for
us that Besov norms can be described in terms of a modulus of
continuity constructed in terms  of the Schrodinger group
$e^{itD^{2}}$, wave semigroup  $e^{itD}$, or the heat semigroup
$e^{-tD^{2}}$.
 In what follows the notation $\|\cdot\|$ bellow means
$\|\cdot\|_{\mathcal{H}}$. We introduce a notion of best
approximation
\begin{equation}
\mathcal{E}(f,\omega)=\inf_{g\in
PW_{\omega}(D)}\|f-g\|, \  f\in \mathcal{H}.\label{BA1}
\end{equation}
We also consider the following family of functionals which
describe a rate of decay of the Spectral  transform
$\mathcal{F}_{D}$
\begin{equation}
\mathcal{R}(f,\omega)= \left (\int_{\omega}^{\infty}
\|\mathcal{F}_{D}(f)(\lambda )\|^{2}_{X(\lambda )} dm
 (\lambda ) \right )^{1/2}, \omega>0.\label{DFT}
 \end{equation}
The Plancherel Theorem for $\mathcal{F}_{D}$ implies that every
such functional is exactly the best approximation of $f$ by
Paley-Wiener functions from $PW_{\omega}(D)$:
\begin{equation}
\mathcal{R}(f,\omega)= \mathcal{E}(f,\omega)=\inf_{g\in
PW_{\omega}(D)}\|f-g\|.\label{decayFT}
\end{equation}
Our main results are the following.

\begin{theorem} The norm of the Besov space $\mathbf{B}_{2,q}^{\alpha}(D), \alpha>0, 1\leq q\leq\infty$ is
equivalent to the following norms
\begin{equation}
\|f\| +\left(\int_{0}^{\infty}\left(s^{\alpha}\mathcal{E}(f, s)
\right)^{q}\frac{ds}{s}\right)^{1/q},\label{AN}
\end{equation}
\begin{equation}
\|f\|+\left(\sum_{k=0}^{\infty}\left(a^{k\alpha }\mathcal{E}(f,
a^{k})\right)^{q}\right)^{1/q}, a>1.
\end{equation}

\begin{equation}
\|f\| +\left(\int_{0}^{\infty}\left(s^{\alpha}\mathcal{R}(f,s)
\right)^{q}\frac{ds}{s}\right)^{1/q},
\end{equation}
and
\begin{equation}
\|f\|+\left(\sum_{k=0}^{\infty}\left(a^{k\alpha
}\mathcal{R}(f,a^{k})\right)^{q}\right)^{1/q}, a>1.
\end{equation}

\label{maintheorem1}
\end{theorem}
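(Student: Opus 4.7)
The plan is to prove all four equivalences in four stages: first reduce the $\mathcal{R}$-norms to the $\mathcal{E}$-norms by Plancherel; second, pass between the continuous integrals and the discrete sums using monotonicity of $\mathcal{E}$; third, establish Jackson and Bernstein inequalities adapted to the subspaces $PW_{\omega}(D)$; and fourth, invoke the Peetre--Butzer--Berens identification of the $K$-interpolation space with an approximation space.

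The first reduction is essentially free, since \eqref{decayFT} already records $\mathcal{R}(f,\omega)=\mathcal{E}(f,\omega)$; the third and fourth expressions therefore coincide with the first and second. For the discretization, since $\omega\mapsto\mathcal{E}(f,\omega)$ is non-increasing, one splits $(0,\infty)$ into intervals $[a^{k},a^{k+1}]$ and estimates $s^{\alpha}\mathcal{E}(f,s)$ on each using monotonicity of $\mathcal{E}$ and monotonicity of $s^{\alpha}$; this gives the equivalence of the continuous integral with its discrete analog, with constants depending only on $a,\alpha,q$.

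For Jackson and Bernstein, the spectral representation $\mathcal{F}_{D}$ conjugates $D$ to multiplication by $\lambda$, so Plancherel applied to the definition \eqref{DFT} gives, for $f$ in the appropriate domain,
\begin{equation*}
\mathcal{E}(f,\omega)^{2}\;=\;\int_{\omega}^{\infty}\|\mathcal{F}_{D}(f)(\lambda)\|^{2}_{X(\lambda)}\,dm(\lambda)\;\leq\;\omega^{-2r}\,\|D^{r}f\|^{2},
\end{equation*}
and also $\|D^{r}g\|\leq\omega^{r}\|g\|$ for every $g\in PW_{\omega}(D)$. Feeding the Jackson estimate into the triangle inequality $\mathcal{E}(f,\omega)\leq\|f-g\|+\mathcal{E}(g,\omega)$ and taking the infimum over $g\in\mathcal{D}_{r/2}$ yields the one-sided bound $\mathcal{E}(f,\omega)\leq K(\omega^{-r},f;\mathcal{H},\mathcal{D}_{r/2})$; after the change of variable $t=\omega^{-r}$ and passing to $L^{q}(dt/t)$ this gives one half of the comparison with \eqref{Besov norm}.

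The remaining, and main, obstacle is the reverse bound for the $K$-functional. The naive choice $g=P_{\omega}f$ combined with Bernstein only yields $K(\omega^{-r},f)\leq\mathcal{E}(f,\omega)+\|P_{\omega}f\|$, and the residual $\|P_{\omega}f\|\leq\|f\|$ is not integrable against $t^{-\alpha q/r}\,dt/t$ near $t=0$. The classical remedy, as in \cite{BL} and \cite{BB}, is a telescoping decomposition
\begin{equation*}
f\;=\;g_{a^{0}}+\sum_{k\geq 0}\bigl(g_{a^{k+1}}-g_{a^{k}}\bigr),
\end{equation*}
with $g_{a^{k}}\in PW_{a^{k}}(D)$ a near-best approximant. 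Bernstein applied to each difference $g_{a^{k+1}}-g_{a^{k}}\in PW_{a^{k+1}}(D)$, combined with a discrete Hardy inequality, resums the crude pointwise estimate into a sharp bound on the full $L^{q}(dt/t)$ integral of $t^{-\alpha/r}K(t,f)$ in terms of the dyadic approximation numbers, modulo the harmless additive $\|f\|$. Collecting the four stages then yields the claimed equivalence of all four expressions with the Besov norm $\mathbf{B}^{\alpha}_{2,q}(D)$.
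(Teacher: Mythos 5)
Your outline is correct, and it reaches the theorem by a route that differs from the paper's in one substantive respect. The paper never compares $\mathcal{E}(f,\omega)$ with the $K$-functional directly: it detours through the modulus of continuity $\Omega_{m}(f,s)=\sup_{|\tau|\le s}\|\Delta^{m}_{\tau}f\|$ built from the group $e^{itD}$, relying on the (cited) equivalence of the $K$-functional norm with the modulus-of-continuity seminorm $\mathbf{b}^{\alpha}_{\infty,n,r}(f)$, and its Jackson inequality $\mathcal{E}(f,\omega)\le C\omega^{-k}\Omega_{m-k}(D^{k}f,1/\omega)$ is obtained by constructing the smoothing operator $\mathcal{Q}^{\omega,m}_{h}(f)=\int h(t)\{(-1)^{m-1}\Delta^{m}_{t/\omega}f+f\}\,dt$ with $h$ an entire function of exponential type one and verifying, via the classical Bernstein theorem for entire functions, that $\mathcal{Q}^{\omega,m}_{h}(f)\in PW_{\omega}(D)$. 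You instead exploit the Hilbert-space structure fully: the spectral projection onto $[0,\omega]$ already realizes the best approximation, so your Jackson bound $\mathcal{E}(f,\omega)\le\omega^{-r}\|D^{r}f\|$ is a one-line Plancherel computation (it appears in the paper too, as the display following the definition of the discrete approximation norm, but is not used there to prove this theorem), and the comparison with $K(\omega^{-r},f;\mathcal{H},\mathcal{D}_{r/2})$ follows at once, provided one reads $\mathcal{D}_{r/2}$ as the domain of $L^{r/2}=D^{r}$ with its graph norm, which is the only reading under which the paper's definition produces smoothness $\alpha$. Your inverse direction (telescoping near-best approximants $f_{k}=g_{a^{k}}-g_{a^{k-1}}\in PW_{a^{k}}(D)$, Bernstein on the blocks, splitting at the scale determined by $t$) is in substance identical to the paper's second Lemma, only aimed at $K(t,f)$ rather than at $\Omega_{r}(D^{n}f,t)$, and you correctly identify why the naive choice $g=P_{\omega}f$ fails. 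What your route buys is self-containedness: since the paper defines $\mathbf{B}^{\alpha}_{2,q}(D)$ by the $K$-functor, working with $K$ directly avoids importing the semigroup characterization of the interpolation space from \cite{BB}, \cite{BL}. What you lose is the by-product: the paper's Direct Approximation Theorem and its Corollary, which control $\mathcal{E}(f,\omega)$ by $\Omega_{m-k}(D^{k}f,1/\omega)$, are results of independent interest that your argument does not produce. Note finally that the paper gives full details only for $q=\infty$ and declares the general case ``along the same lines''; your sketch leaves a comparable amount implicit, the one step that genuinely needs writing out being the discrete Hardy inequality $\sum_{N}\bigl(a^{-N(r-\alpha)}\sum_{k\le N}a^{kr}\|f_{k}\|\bigr)^{q}\le C\sum_{k}\bigl(a^{k\alpha}\|f_{k}\|\bigr)^{q}$, valid precisely because $r>\alpha$. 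Every step you name is sound.
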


 \begin{theorem}
 A  vector  $f\in \mathcal{H}$ belongs to $ \mathbf{B}_{2,q}^{\alpha}(D), \alpha>0, 1\leq
q\leq \infty,$ if and only if there exists a sequence of  vectors 
$f_{k}=f_{k}(f)\in PW_{a^{k}}(D),a>1,  k\in \mathbb{N}$ such that
the series $\sum_{k}f_{k}$ converges to $f$ in $\mathcal{H}$ and  the
following inequalities hold for some $c_{1}>0, c_{2}>0$ which are
independent on $f\in \mathbf{B}_{2,q}^{\alpha}(D)$
\begin{equation}
c_{1}\|f\|_{\mathbf{B}_{2,q}^{\alpha}(D)}\leq
\left(\sum_{k=0}^{\infty}\left(a^{k\alpha
}\|f_{k}\|\right)^{q}\right)^{1/q}\leq
c_{2}\|f\|_{\mathbf{B}_{2,q}^{\alpha}(D)}, a>1.\label{normequiv}
\end{equation}

\label{maintheorem2}
\end{theorem}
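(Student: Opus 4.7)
The plan is to treat Theorem \ref{maintheorem1} as a black box, so that the norm $\|f\|_{\mathbf{B}^{\alpha}_{2,q}(D)}$ may be replaced by the discrete approximation norm $\|f\|+\left(\sum_{k\geq 0}\bigl(a^{k\alpha}\mathcal{E}(f,a^{k})\bigr)^{q}\right)^{1/q}$. Once this is done, Theorem \ref{maintheorem2} reduces to a standard telescoping/Hardy argument of the type used in classical approximation theory.

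For the forward implication, assume $f\in\mathbf{B}^{\alpha}_{2,q}(D)$. For each $k\geq 0$ I would select a near-best approximant $g_{k}\in PW_{a^{k}}(D)$ with $\|f-g_{k}\|\leq 2\,\mathcal{E}(f,a^{k})$, and set $f_{0}=g_{0}$, $f_{k}=g_{k}-g_{k-1}$ for $k\geq 1$. Because $PW_{a^{k-1}}(D)\subset PW_{a^{k}}(D)$, each $f_{k}$ sits in $PW_{a^{k}}(D)$, the partial sums telescope to $g_{N}$, and $\|f-g_{N}\|\leq 2\,\mathcal{E}(f,a^{N})\to 0$ (vanishing of the tail follows from (\ref{decayFT})), so $\sum_{k}f_{k}=f$ in $\mathcal{H}$. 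The triangle inequality gives $\|f_{k}\|\leq 4\,\mathcal{E}(f,a^{k-1})$, hence
\[
\left(\sum_{k\geq 0}\bigl(a^{k\alpha}\|f_{k}\|\bigr)^{q}\right)^{1/q}\leq C\left(\sum_{k\geq 0}\bigl(a^{k\alpha}\mathcal{E}(f,a^{k})\bigr)^{q}\right)^{1/q},
\]
which by Theorem \ref{maintheorem1} is bounded by $c_{2}\|f\|_{\mathbf{B}^{\alpha}_{2,q}(D)}$.

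For the reverse implication, assume $f=\sum_{k}f_{k}$ with the prescribed $\ell^{q}$ control, and form $g_{N}=\sum_{k=0}^{N}f_{k}\in PW_{a^{N}}(D)$. Then (\ref{decayFT}) yields $\mathcal{E}(f,a^{N})\leq\|f-g_{N}\|\leq\sum_{k>N}\|f_{k}\|$. The task is to bound $\bigl(\sum_{N}(a^{N\alpha}\mathcal{E}(f,a^{N}))^{q}\bigr)^{1/q}$ by the given sum. Writing $a^{N\alpha}\sum_{k>N}\|f_{k}\|=\sum_{j\geq 1}a^{-j\alpha}\bigl(a^{(N+j)\alpha}\|f_{N+j}\|\bigr)$ presents the left-hand side as the discrete convolution of the geometrically decaying sequence $(a^{-j\alpha})_{j\geq 1}\in\ell^{1}$ with $(a^{k\alpha}\|f_{k}\|)_{k}\in\ell^{q}$; Young's inequality $\ell^{1}\ast\ell^{q}\hookrightarrow\ell^{q}$ therefore closes the estimate (with constant depending on $\alpha>0$ and $a>1$). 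The term $\|f\|\leq\sum_{k}\|f_{k}\|$ is dominated by the same right-hand side via Hölder in $k$. Invoking Theorem \ref{maintheorem1} once more converts these bounds back to $\|f\|_{\mathbf{B}^{\alpha}_{2,q}(D)}\leq c_{1}^{-1}\bigl(\sum_{k}(a^{k\alpha}\|f_{k}\|)^{q}\bigr)^{1/q}$.

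The main obstacle is the Hardy-type step in the reverse direction: one must verify that the convolution estimate is uniform in $q\in[1,\infty]$ (the endpoint $q=\infty$ reduces to $\ell^{1}\ast\ell^{\infty}\hookrightarrow\ell^{\infty}$, which still gives the correct supremum bound since $\alpha>0$) and check that the constants depend only on $\alpha$ and $a$, not on $f$. Everything else—telescoping, containment of Paley-Wiener spaces, and convergence of $g_{N}$ to $f$—is essentially formal once Theorem \ref{maintheorem1} is available.
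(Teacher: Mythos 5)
Your proposal is correct and follows essentially the same route as the paper: the paper proves the $q=\infty$ case in detail via exactly your telescoping construction ($f_0=g_0$, $f_k=g_k-g_{k-1}$ with near-best approximants $g_k\in PW_{a^k}(D)$) and the tail estimate $\mathcal{E}(f,a^N)\leq\sum_{k>N}\|f_k\|$, and then states that the general-$q$ case goes along the same lines. Your Young's-inequality ($\ell^1\ast\ell^q\hookrightarrow\ell^q$) step is precisely the standard discrete Hardy argument needed to supply the details the paper omits for $1\leq q<\infty$.
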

In the case when $\alpha>0, q=\infty$ one has to make appropriate
modifications in the above formulas.

According to (\ref{decayFT}) the functional
$\mathcal{E}(f,\omega)$ is a measure of decay of the Spectral
 Transform $\mathcal{F}_{D}$ and the Theorems
\ref{maintheorem1} and \ref{maintheorem2} show that Besov spaces
on a manifold $M$  describe  decay of the Spectral 
transform $\mathcal{F}_{D}$ associated with any appropriate
operator $D$.

In the case $\mathcal{H}=L_{2}(\mathbb{R}^{d})$ the Theorems \ref{maintheorem1} and
\ref{maintheorem2} are  classical and can be found in \cite{Akh},
\cite{N} and \cite{Tit}. In the case when $\mathcal{H}$ is $L_{2}$-space on a Riemannian manifold or a graph and $D$ is the square root from the corresponding Laplace operator the Theorems 1 and 2 were proved in our papers \cite{Pes01}-\cite{Pes13}.

\section{Paley-Wiener subspaces generated by a self-adjoint operator in a Hilbert space}

Now we describe Paley-Wiener functions  for a self-adjoint
positive definite  operator $D$ in $\mathcal{H}$. According
to the spectral theory \cite{BS} for any self-adjoint operator $D$
in a Hilbert space $\mathcal{H}$ there exist a direct integral of
Hilbert spaces $X=\int X(\lambda )dm (\lambda )$ and a unitary
operator $\mathcal{F}_{D}$ from $\mathcal{H}$ onto $X$, which
transforms domain of $D^{k}, k\in \mathbb{N},$ onto $X_{k}=\{x\in
X|\lambda ^{k}x\in X \}$ with norm

\begin{equation}
\|x(\lambda)\|_{X_{k}}= \left (\int^{\infty}_{0}
 \lambda^{2k}\|x(\lambda )\|^{2}_{X(\lambda )} dm
 (\lambda ) \right )^{1/2}
 \end{equation}
besides $\mathcal{F}_{D}(D^{k} f)=
 \lambda ^{k} (\mathcal{F}_{D}f), $ if $f$ belongs to the domain of
 $D^{k}$.  As it is known, $X$ is the set of all $m $-measurable
  functions $\lambda \rightarrow x(\lambda )\in X(\lambda ) $, for which the norm

$$\|x\|_{X}=\left ( \int ^{\infty }_{0}\|x(\lambda )\|^{2}_{X(\lambda )}
 dm (\lambda ) \right)^{1/2} $$ is finite.

\begin{definition}
We will say that a vector $f$
from $\mathcal{H}$ belongs to the Paley-Wiener space $PW_{\omega
}(D)$ if the support of the Spectral  transform
$\mathcal{F}_{D}f$ belong to $[0, \omega]$. For a vector $f\in
PW_{\omega }(D)$ the notation $\omega_{f}$ will be used for a
positive number such that $[0, \omega_{f}]$ is the smallest
interval which contains the support of the Spectral 
transform $\mathcal{F}_{D}f$.
\end{definition}

Using the spectral resolution of identity $P_{\lambda}$ we define
the unitary group of operators by the formula
$$
e^{itD}f=\int_{0}^{\infty}e^{it\tau}dP_{\tau}f, f\in \mathcal{H},
t\in \mathbb{R}.
$$

Let us introduce the operator
\begin{equation}
\textbf{R}_{D}^{\omega}f=\frac{\omega}{\pi^{2}}\sum_{k\in\mathbb{Z}}\frac{(-1)^{k-1}}{(k-1/2)^{2}}
e^{i\left(\frac{\pi}{\omega}(k-1/2)\right)D}f, f\in \mathcal{H},
\omega>0.\label{Riesz1}
\end{equation}
 Since  $\left\|e^{it\mathcal{L}}f\right\|=\|f\| $ and
\begin{equation}
\frac{\omega}{\pi^{2}}\sum_{k\in\mathbb{Z}}\frac{1}{(k-1/2)^{2}}=\omega,\label{id}
 \end{equation}
 the series in (\ref{Riesz1}) is convergent and it shows that
 $\textbf{R}_{D}^{\omega}$ is a bounded operator in $\mathcal{H}$
 with the norm $\omega$:
\begin{equation}
 \|\textbf{R}_{D}^{\omega}f\|\leq \omega\|f\|, f\in
\mathcal{H}.\label{Riesznorm}
 \end{equation}

 The next theorem contains generalizations of several results
 from the classical harmonic analysis (in particular  the Paley-Wiener theorem)
  and it  follows essentially
from our  results in \cite{Pes1}, \cite{Pes2},
\cite{Pes12}.

\begin{theorem}
The following statements hold:
\begin{enumerate}

  \item the set $\bigcup _{ \omega >0}PW_{\omega}(D)$ is dense in $\mathcal{H}$;

\item the space $PW_{\omega }(D)$ is a linear closed subspace in
$\mathcal{H}$;

\item a function $f\in \mathcal{H}$ belongs to $PW_{\omega}(D)$ if
and only if it belongs to the  set
$$
\mathcal{D}_{\infty}=\bigcap_{k=1}^{\infty}\mathcal{D}_{k}(D),
$$
and for all $s\in \mathbb{R}_{+}$ the following Bernstein
inequality takes place
\begin{equation}
\|D^{s}f\|\leq \omega^{s}\|f\|;\label{B}
\end{equation}

\item
a vector $f\in \mathcal{H}$ belongs to the space
$PW_{\omega_{f}}(D), 0<\omega_{f}<\infty,$ if and only if $f$
belongs to the set $\mathcal{D}_{\infty}$, the limit
$$
 \lim_{k\rightarrow \infty}\|D^{k}f\|^{1/k}
$$
exists and
\begin{equation}
\lim_{k\rightarrow
\infty}\|D^{k}f\|^{1/k}=\omega_{f}.\label{limitcond}
\end{equation}

\item a vector $f\in \mathcal{H}$ belongs to $PW_{\omega}(D)$ if and
only if $f\in \mathcal{D}_{\infty}$ and  the upper bound
\begin{equation}
\sup _{k\in N }\left(\omega ^{-k}\|D^{k}f\|\right)<\infty
\end{equation}
is finite,

\item a vector  $f\in \mathcal{H}$ belongs to $PW_{\omega}(D)$ if
and only if $f\in \mathcal{D}_{\infty}$ and
\begin{equation}
\underline{\lim}_{k\rightarrow\infty}\|D^{k}f\|^{1/k}=\omega<\infty.
\end{equation}
In this case $\omega=\omega_{f}$.

\item a vector  $f\in \mathcal{H}$ belongs to $ PW_{\omega}(D)$ if
and only if it belongs to the to the set $\mathcal{D}_{\infty}$ and the
following Riesz interpolation formula holds
\begin{equation}
(iD)^{n}f=\left(\textbf{R}_{D}^{\omega}\right)^{n}f, n\in
\mathbb{N}; \label{Rieszn}
\end{equation}

 \item  $f\in PW_{\omega}(D)$ if and only if for every $g\in
\mathcal{H}$ the scalar-valued function of the real variable  $
\left<e^{itD}f,g\right>, t\in \mathbb{R}^{1},$
 is bounded on the real line and has an extension to the complex
plane as an entire function of the exponential type $\omega$;

\item $f\in PW_{\omega}(D)$ if and only if the abstract-valued
function $e^{itD}f$  is bounded on the real line and has an
extension to the complex plane as an entire function of the
exponential type $\omega$;

\item $f\in PW_{\omega}(D)$ if and only if the solution $u(t),
t\in \mathbb{R}^{1}$ of the Cauchy problem for the corresponding
abstract Schrodinger equation
$$
i\frac{\partial u(t)}{\partial t}=Du(t), u(0)=f, i=\sqrt{-1},
$$
 has analytic extension $u(z)$ to the
complex plane $\mathbb{C}$ as an entire function and satisfies the
estimate
$$
\|u(z)\|_{\mathcal{H}}\leq e^{\omega|\Im z|}\|f\|_{\mathcal{H}}.
$$

 \end{enumerate}

 \end{theorem}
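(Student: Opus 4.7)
\emph{Proof plan.} The common engine for all ten statements is the Spectral Transform $\mathcal{F}_D$, which identifies $D$ with multiplication by $\lambda$ on the direct integral $X$. Under this identification, $f\in PW_\omega(D)$ exactly when $\mathcal{F}_D f$ vanishes outside $[0,\omega]$, so items (1) and (2) are immediate: the spectral cut-offs $f_\omega=\mathcal{F}_D^{-1}(\chi_{[0,\omega]}\mathcal{F}_D f)$ exhaust $\mathcal{H}$ as $\omega\to\infty$, and the support condition passes to $\mathcal{H}$-limits.

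For the Bernstein-type equivalences (3)--(6) I would prove (3) first. The forward direction is the spectral computation
\begin{equation*}
\|D^s f\|^2=\int_0^{\omega}\lambda^{2s}\|\mathcal{F}_D f(\lambda)\|^2_{X(\lambda)}\,dm(\lambda)\leq\omega^{2s}\|f\|^2.
\end{equation*}
The converse, and the weaker hypotheses needed for (5) and (6), follow from the estimate
\begin{equation*}
\int_{\omega+\varepsilon}^{\infty}\|\mathcal{F}_D f(\lambda)\|^2_{X(\lambda)}\,dm(\lambda)\leq(\omega+\varepsilon)^{-2k}\|D^k f\|^2,
\end{equation*}
which forces the spectral support into $[0,\omega]$ once one sends $k\to\infty$ along the subsequence realizing the $\liminf$ of $\|D^k f\|^{1/k}$. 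Item (4) then identifies the sharp type $\omega_f$ as the edge of the spectral support.

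For (7) I would apply the classical scalar Riesz interpolation formula to $z\mapsto e^{iz\lambda}$ at $z=0$, which yields the pointwise identity
\begin{equation*}
i\lambda=\frac{\omega}{\pi^{2}}\sum_{k\in\mathbb{Z}}\frac{(-1)^{k-1}}{(k-1/2)^{2}}e^{i(\pi/\omega)(k-1/2)\lambda},\qquad \lambda\in[0,\omega].
\end{equation*}
Integrating against $dP_\lambda f$ and interchanging sum and spectral integral---justified by (\ref{id}), (\ref{Riesznorm}) and dominated convergence---converts this into $iDf=\textbf{R}_{D}^{\omega}f$; the general $n$ follows by induction. For (8)--(10), items (9) and (10) coincide because the Schrodinger solution is $u(t)=e^{itD}f$. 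The forward direction uses that $e^{izD}f=\int_0^{\omega}e^{iz\lambda}\,dP_\lambda f$ extends to an entire function of type $\omega$ satisfying the stated norm bound, and (8) follows by pairing with $g$. Conversely, if $\langle e^{itD}f,g\rangle$ is entire of exponential type $\omega$ and bounded on $\mathbb{R}$ for every $g$, testing against $g$ whose spectral support lies in $(\omega,\infty)$ and invoking the scalar Paley-Wiener theorem shows that the high-frequency part of $f$ vanishes.

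The step I expect to be most delicate is (7): justifying the termwise application of $\mathcal{F}_D^{-1}$ to the infinite scalar Riesz series requires a dominated-convergence argument in the spectral integral that is uniform across $[0,\omega]$, and linking the abstract operator $\textbf{R}_{D}^{\omega}$ built in $\mathcal{H}$ with its pointwise spectral multiplier. The remaining items reduce, via the spectral theorem, to standard manipulations with scalar integrals.
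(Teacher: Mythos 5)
The paper does not actually prove this theorem: it states that the result ``follows essentially from our results in [Pes1], [Pes2], [Pes12]'' and moves on, so there is no in-text argument to compare yours against. Judged on its own, your spectral-theorem plan is the standard and correct route (and is essentially what those references do): items (1)--(6) reduce to manipulations of $\int\lambda^{2k}\|\mathcal{F}_Df(\lambda)\|^2\,dm(\lambda)$, and your tail estimate with $k\to\infty$ is exactly the right mechanism for the converses of (3), (5), (6); item (4) then falls out by combining the upper bound from Bernstein with the lower bound $\|D^kf\|^2\ge(\omega_f-\delta)^{2k}\|P_{(\omega_f-\delta,\omega_f]}f\|^2$, which you should state explicitly since it is what makes the limit exist and equal $\omega_f$ rather than merely the $\liminf$.

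Two places where your sketch is genuinely incomplete. First, in (7) you only argue the forward direction. The converse is not a dominated-convergence issue at all: if $(iD)^nf=(\textbf{R}_{D}^{\omega})^{n}f$ for all $n$, then the operator-norm bound $\|\textbf{R}_{D}^{\omega}f\|\le\omega\|f\|$ gives $\|D^nf\|\le\omega^n\|f\|$, and membership in $PW_\omega(D)$ follows from criterion (3) or (5); without this step the ``if and only if'' in (7) is unproved. Second, in the converse of (8) you invoke ``the scalar Paley--Wiener theorem,'' but $\left<e^{itD}f,g\right>$ is only bounded on $\mathbb{R}$, not square-integrable, so you need the distributional (Paley--Wiener--Schwartz) version: taking $g=P_{(\omega+\varepsilon,b]}f$ makes $\phi(t)=\left<e^{itD}f,g\right>$ the Fourier transform of a positive measure supported in $(\omega+\varepsilon,b]$, while exponential type $\omega$ plus boundedness forces its distributional Fourier transform into $[-\omega,\omega]$, whence $\|P_{(\omega+\varepsilon,b]}f\|=0$. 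Relatedly, passing from (8) to (9) requires the weak-to-strong analyticity argument for vector-valued functions, which you should at least name. With these three points supplied, the plan is sound.
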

\section{Direct and Inverse Approximation Theorems}

Now we are going to use the notion of the best approximation
(\ref{BA1}) to introduce Approximation spaces
$E^{\alpha}_{2,q}(D), 0<\alpha<r,r\in \mathbb{N}, 1\leq q\leq
\infty,$  as spaces for which   the following norm is finite
\begin{equation}
\|f\|_{E^{\alpha}_{2,q}(D)}=\|f\|
+\left(\int_{0}^{\infty}\left(s^{\alpha}\mathcal{E}(f, s)
\right)^{q}\frac{ds}{s}\right)^{1/q},\label{AN}
\end{equation}
where $0<\alpha<r, 1\leq q<\infty,$ or $0\leq \alpha\leq r,
q=\infty.$ It is easy to verify that this norm is equivalent to
the following "discrete" norm
\begin{equation}
\|f\|+\left(\sum_{j\in \mathbb{N}}\left(a^{j\alpha }\mathcal{E}(f,
a^{j})\right)^{q}\right)^{1/q},a>1,\label{DAN}
\end{equation}

The Plancherel Theorem for $\mathcal{F}_{D}$ also gives the
following inequality
\begin{equation}
 \mathcal{E}(f,\omega)\leq \omega^{-k}\left (\int_{\omega}^{\infty}
\|\mathcal{F}_{D}(D^{k}f)(\lambda )\|^{2}_{X(\lambda )} dm
 (\lambda ) \right )^{1/2}\leq \omega^{-k}\|D^{k}f\|.
\end{equation}

In the classical Approximation theory the Direct and Inverse
Theorems give equivalence of the Approximation and Besov spaces.
Our goal is to extend these results to a more general setting.

For any $f\in \mathcal{H}$ we
introduce a difference operator of order $m\in \mathbb{N}$ as
\begin{equation}
\Delta^{m}_{\tau}f=(-1)^{m+1}\sum^{m}_{j=0}(-1)^{j-1}C^{j}_{m}e^{j\tau(iD)}f,
\tau\in \mathbb{R}.\label{Dif}
\end{equation}
and the modulus of continuity is defined as
\begin{equation}
\Omega_{m}(f,s)=\sup_{|\tau|\leq
s}\left\|\Delta^{m}_{\tau}f\right\|\label{ModCont}
\end{equation}

  The following Theorem is a generalization of the
classical Direct Approximation Theorem by entire functions of
exponential type \cite{N}.

\begin{theorem} There exists a constant $C>0$ such that for all
$\omega>0$ and all $f$
\begin{equation}
\mathcal{E}(f,\omega)\leq
\frac{C}{\omega^{k}}\Omega_{m-k}\left(D^{k}f, 1/\omega\right),
0\leq k\leq m.\label{J}
\end{equation}
In particular the following embeddings hold true
\begin{equation}
\mathbf{B}_{2,q}^{\alpha}(D)\subset E^{\alpha}_{q}(D), 1\leq q\leq
\infty.\label{embd}
\end{equation}
\end{theorem}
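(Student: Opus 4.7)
The plan is to transfer everything to the spectral side via $\mathcal{F}_{D}$ and reduce inequality (\ref{J}) to a pointwise estimate for a one-variable kernel. From (\ref{decayFT}) the best approximation functional equals
\[
  \mathcal{E}(f,\omega)^{2}= \int_{\omega}^{\infty}\|\mathcal{F}_{D} f(\lambda)\|^{2}_{X(\lambda)}\, dm(\lambda),
\]
while the spectral calculus of $D$ together with the definition (\ref{Dif}) gives
\[
  \|\Delta^{m-k}_{\tau} D^{k} f\|^{2}= \int_{0}^{\infty} \bigl|e^{i\tau\lambda}-1\bigr|^{2(m-k)}\lambda^{2k}\|\mathcal{F}_{D} f(\lambda)\|^{2}_{X(\lambda)}\, dm(\lambda).
\]
I then average the second identity over $\tau\in[-1/\omega,1/\omega]$, bound the average by the supremum $\Omega_{m-k}(D^{k}f,1/\omega)^{2}$, swap integrals via Fubini, and substitute $s=\tau\omega$ to arrive at
\[
  \int_{0}^{\infty} F(\lambda/\omega)\,\lambda^{2k}\|\mathcal{F}_{D} f(\lambda)\|^{2}_{X(\lambda)}\, dm(\lambda) \le 2\,\Omega_{m-k}(D^{k}f,1/\omega)^{2},
\]
where
\[
  F(y)=\int_{-1}^{1}\bigl|e^{isy}-1\bigr|^{2(m-k)}\,ds=4^{m-k}\int_{-1}^{1}\sin^{2(m-k)}(sy/2)\,ds.
\]

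The core technical step is to show that $F(y)\ge c_{1}>0$ uniformly for $y\ge 1$. With the substitution $u=sy/2$ I rewrite $F(y)=(2\cdot 4^{m-k}/y)\int_{-y/2}^{y/2}\sin^{2(m-k)}(u)\,du$; periodicity of $\sin^{2(m-k)}$ and its strictly positive mean value give $F(y)\to 2\binom{2(m-k)}{m-k}>0$ as $y\to\infty$. Combined with continuity of $F$ and its pointwise positivity on $(0,\infty)$, this yields a uniform positive lower bound on $[1,\infty)$. This is the step I expect to be the main obstacle, since one has to rule out that oscillation drives $F$ arbitrarily close to zero somewhere on a compact subinterval $[1,Y_{0}]$; the compactness of that subinterval together with the asymptotic at infinity (which fixes a workable $Y_{0}$) takes care of the matter.

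Armed with $F\ge c_{1}$ on $[1,\infty)$ and the trivial bound $\lambda^{2k}\ge\omega^{2k}$ for $\lambda\ge\omega$, the chain
\[
  \mathcal{E}(f,\omega)^{2}\le \frac{1}{\omega^{2k}}\int_{\omega}^{\infty}\lambda^{2k}\|\mathcal{F}_{D}f\|^{2}\, dm \le \frac{1}{c_{1}\omega^{2k}}\int_{0}^{\infty} F(\lambda/\omega)\lambda^{2k}\|\mathcal{F}_{D}f\|^{2}\, dm \le \frac{2}{c_{1}\omega^{2k}}\Omega_{m-k}(D^{k}f,1/\omega)^{2}
\]
proves (\ref{J}) with $C=\sqrt{2/c_{1}}$. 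For the embedding (\ref{embd}) I specialize to $k=0$, obtaining $\mathcal{E}(f,\omega)\le C\,\Omega_{m}(f,1/\omega)$, substitute into the definition (\ref{AN}) of $\|f\|_{E^{\alpha}_{2,q}(D)}$, and change variables $t=1/s$ to recognize
\[
  \left(\int_{0}^{\infty}\bigl(t^{-\alpha}\Omega_{m}(f,t)\bigr)^{q}\,\frac{dt}{t}\right)^{1/q}
\]
as a standard equivalent norm on $\mathbf{B}^{\alpha}_{2,q}(D)$ built from the modulus of continuity of the Schrodinger group $e^{itD}$, as announced in the Introduction.
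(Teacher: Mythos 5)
Your proof is correct, but it follows a genuinely different route from the paper's. The paper argues constructively: it builds an explicit near-best approximant $\mathcal{Q}_{h}^{\omega,m}(f)=\int h(t)\{(-1)^{m-1}\Delta^{m}_{t/\omega}f+f\}\,dt$ with $h$ an integrable entire function of exponential type one, verifies via a Bernstein-type argument that this vector lies in $PW_{\omega}(D)$, bounds the error by $\int h(t)\,\Omega_{m}(f,t/\omega)\,dt$, and then invokes the reduction $\Omega_{m}(f,s)\le s^{k}\Omega_{m-k}(D^{k}f,s)$ together with the doubling property of the modulus of continuity. You instead work entirely on the spectral side: you average $\|\Delta^{m-k}_{\tau}D^{k}f\|^{2}$ over $\tau\in[-1/\omega,1/\omega]$, apply Tonelli, and reduce the claim to a uniform positive lower bound for the kernel $F(y)=\int_{-1}^{1}|e^{isy}-1|^{2(m-k)}ds$ on $[1,\infty)$, which you establish correctly by combining pointwise positivity and continuity on a compact interval with the periodic-mean asymptotic at infinity. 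Your computation checks out at every step (including the degenerate case $k=m$, where $F\equiv 2$), and your treatment of the embedding (\ref{embd}) rests on exactly the same unproved-but-asserted equivalence between the $K$-functor norm and the modulus-of-continuity norm that the paper itself relies on, so you are not worse off there. The trade-off is worth noting: your argument is shorter and more elementary, but it hinges on the Plancherel identity $\mathcal{E}(f,\omega)=\mathcal{R}(f,\omega)$ and so is intrinsically tied to the Hilbert-space ($L^{2}$) setting, whereas the paper's convolution construction transfers to Banach-space and $L^{p}$ versions of this theory; moreover, the paper's proof produces a \emph{linear bounded} approximation operator $\mathcal{Q}_{h}^{\omega,m}:\mathcal{H}\to PW_{\omega}(D)$, which is recorded in the subsequent Corollary and reused later, while your proof yields only the inequality (harmless here, since in a Hilbert space the orthogonal projection onto $PW_{\omega}(D)$ realizes the infimum, but it means the Corollary would need the paper's construction anyway).
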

\begin{proof}

If $h\in L_{1}(\mathbb{R})$ is an entire function of exponential
type $\omega$ then for any $f\in{\mathcal H}$ the vector
$$
g=\int _{-\infty}^{\infty}h(t)e^{itD}fdt
$$
belongs to $PW_{\omega}(D).$ Indeed,  for every real $\tau$ we
have
$$
e^{i\tau
D}g=\int_{-\infty}^{\infty}h(t)e^{i(t+\tau)D}fdt=\int_{-\infty}^{\infty}h(
t-\tau)e^{itD}fdt.
$$
 Using this formula we can extend the abstract function $e^{i\tau D}g$ to the
complex plane as
$$
e^{izD}g=\int_{-\infty}^{\infty}h(t-z)e^{itD}fdt.
$$
 Since by assumption $h\in L_{1}(\mathbb{R})$ is an entire function of exponential
type $\omega$
 we have
$$
\|e^{izD}g\|\leq
\|f\|\int_{-\infty}^{\infty}|h(t-z)|dt\leq\|f\|e^{\omega|z|}
 \int_{-\infty}^{\infty}|h(t)|dt.
 $$
It shows that for every functional $g^{*}\in {\mathcal H}$ the function
$\left<e^{izD}g,g^{*}\right>$ is an entire function and
$$
\left|\left<e^{izD}g,g^{*}\right>\right|\leq
\|g^{*}\|\|f\|e^{\omega|z|}\int_{-\infty}^{\infty}|h(t)|dt.
$$
In other words  $\left<e^{izD}g,g^{*}\right>$ is an entire
function of the exponential type $\omega$
  which is bounded on the real line and  application of the classical
Bernstein theorem gives the following  inequality
$$
\left|\left(\frac{d}{dt}\right)^{k}\left<e^{itD}g,g^{*}\right>\right|
\leq\omega^{k}\sup_{t\in\mathbb{R}}\left|\left<e^{itD}g,g^{*}\right>\right|.
$$
Since
$$
\left(\frac{d}{dt}\right)^{k}\left<e^{itD}g,g^{*}\right>=\left<e^{itD}(iD)^{k}g,g^{*}\right>
$$
we obtain for $t=0$
$$
\left|\left<D^{k}g,g^{*}\right>\right|\leq
\omega^{k}\|g^{*}\|\|f\|\int_{-\infty}^{\infty}|h(\tau)| d\tau.
$$
 Choosing $g^{*}$ such that $\|g^{*}\|=1$ and $\left<D^{k}g,g^{*}\right>=\|D^{k}g\|$
  we obtain the following inequality
$$
\|D^{k}g\|\leq
\omega^{k}\|f\|\int_{-\infty}^{\infty}|h(\tau)|d\tau
$$
which  implies that $g$ belongs to $PW_{\omega}(D)$.

Let
\begin{equation}
h(t)=a\left(\frac{\sin (t/n)}{t}\right)^{n}
\end{equation}
where $n\geq m+3$ is an even integer and
$$
a=\left(\int_{-\infty}^{\infty}\left(\frac{\sin
(t/n)}{t}\right)^{n}dt\right)^{-1}.
$$
With such choice of $a$ and $n$ the function $h$ will have the
 following properties:

\begin{enumerate}

\item $h$ is an even nonnegative entire function of exponential
type one;

\item $h$ belongs to $L_{1}(\mathbb{R})$ and its
$L_{1}(\mathbb{R})$-norm is $1$;

\item  the integral
\begin{equation}\int_{-\infty}^{\infty}h(t)|t|^{m}dt
\end{equation} is finite.
\end{enumerate}
Consider
 the following vector
\begin{equation}
\mathcal{Q}_{h}^{\omega,m}(f)=\int_{-\infty}^{\infty}
h(t)\left\{(-1)^{m-1}\Delta^{m}_{t/\omega}f+f\right\}dt,\label{id}
 \end{equation}
 where
\begin{equation}
(-1)^{m+1}\Delta^{m}_{s}f=(-1)^{m+1}\sum^{m}_{j=0}(-1)^{j-1}C^{j}_{m}e^{js(iD)}f=
 \sum_{j=1}^{m}b_{j}e^{js(iD)}f-f,\label{id2}
 \end{equation}
 and
 \begin{equation}
 b_{1}+b_{2}+...
+b_{m}=1.
\end{equation}
The formulas (\ref{id}) and (\ref{id2}) imply the following formula
$$
\mathcal{Q}_{h}^{\omega,m}(f)=\int_{-\infty}^{\infty}h(t)\sum_{j=1}^{m}b_{j}e^{j\frac{t}{\omega}(iD)
} fdt=\int_{-\infty}^{\infty}\Phi(t)e^{t(iD)}fdt.
$$
where
$$
\Phi(t)=\sum_{j=1}^{m}b_{j}\left(\frac{\omega}{j}\right)h\left(t\frac{\omega}{j}\right).
$$
Since the function $h(t)$ is of the exponential type one every function
$h(t\omega/j)$ is of the type $\omega/j$. It also shows that
the function $\Phi(t)$ is of the  exponential  type $\omega$ as well.

Now we estimate the error of approximation of $f$ by 
$\mathcal{Q}_{h}^{\omega,m}(f)$. If the modulus of
continuity is defined as

\begin{equation}
\Omega_{m}(f,s)=\sup_{|\tau|\leq
s}\left\|\Delta^{m}_{\tau}f\right\|\label{dif}
\end{equation}
then since by (\ref{id})
$$
f-\mathcal{Q}_{h}^{\omega,m}(f)=
\int_{-\infty}^{\infty}h(t)\Delta^{m}_{t/\omega}fdt
$$
we obtain
$$
\mathcal{E}(f,\omega)\leq\|f-\mathcal{Q}_{h}^{\omega,m}(f)\|\leq
\int_{-\infty}^{\infty}h(t)\left\|\Delta^{m}_{t/\omega}f\right\|dt\leq
\int_{-\infty}^{\infty}h(t)\Omega_{m}\left(f, t/\omega\right)dt.
$$
Now we are going to use the following inequalities
\begin{equation}
\Omega_{m}\left(f, s\right)\leq s^{k}\Omega_{m-k}(D^{k}f,
s)\label{3.14}
\end{equation}

\begin{equation}
\Omega_{m}\left(f, as\right)\leq \left(1+a\right)^{m}\Omega_{m}(f,
s), a\in \mathbb{R}_{+}.\label{3.15}
\end{equation}
The first one follows from the identity
\begin{equation}
\Delta_{t}^{k}f=\left(e^{itD}-I\right)^{k}f=\int_{0}^{t}...\int_{0}^{t}e^{i(\tau_{1}+...\tau_{k})D}
D^{k}fd\tau_{1}...d\tau_{k},\label{id3}
\end{equation}
where $I$ is the identity operator and $k\in \mathbb{N}$.  The
second one follows from the property
$$
\Omega_{1}\left(f, s_{1}+s_{2}\right)\leq \Omega_{1}\left(f,
s_{1}\right)+\Omega_{1}\left(f,s_{2}\right)
$$
which is easy to verify. We can continue our estimation of
$E(f,\omega)$.
$$
\mathcal{E}(f,\omega)\leq
\int_{-\infty}^{\infty}h(t)\Omega_{m}\left(f, t/\omega\right)dt
\leq \frac{\Omega_{m-k}\left(D^{k}f,
1/\omega\right)}{\omega^{k}}\int_{-\infty}^{\infty}h(t)|t|^{k}(1+|t|)^{m-k}dt\leq
$$
$$
\frac{{C}^{h}_{m,k}}{\omega^{k}}\Omega_{m-k}\left(D^{k}f,
1/\omega\right),
$$
where the integral
 $$
C^{h}_{m,k}=\int_{-\infty}^{\infty}h(t)|t|^{k}(1+|t|)^{m-k}dt
$$
 is finite by the choice of $h$. The inequality (\ref{J}) is proved
  and it implies the second part
of the Theorem.
\end{proof}

In fact we proved a little bit more. Namely for the same choice of
the function $h$ the following holds.
\begin{corollary} For any $0\leq k\leq m, k,m\in \mathbb{N,}$
here exists a constant $C^{h}_{ m, k}$ such that for  all
$0<\omega<\infty$ and all $f\in{\mathcal H}$ the following inequality
holds
\begin{equation}
\mathcal{E}(f,\omega)\leq \|\mathcal{Q}_{h}^{\omega,m}(f)-f\|\leq
\frac{C^{h}_{ m, k}}{\omega^{k}}\Omega_{m-k}\left(D^{k}f,
1/\omega\right),\label{J1}
\end{equation}
where
$$
C^{h}_{m,k}=\int_{-\infty}^{\infty}h(t)|t|^{k}(1+|t|)^{m}dt, 0\leq
k\leq m,
$$
and the operator
$$
\mathcal{Q}_{h}^{\omega,m}:{\mathcal H}\rightarrow PW_{\omega}(D)
$$
is defined in (\ref{id}).
\end{corollary}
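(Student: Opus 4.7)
The plan is to observe that the proof of Theorem 3.1 has already done essentially all the work, so the corollary reduces to two separate observations: identifying $\mathcal{Q}_{h}^{\omega,m}(f)$ as an element of $PW_{\omega}(D)$, and tracking the explicit constant in the estimate already carried out.

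First I would verify the membership $\mathcal{Q}_{h}^{\omega,m}(f)\in PW_{\omega}(D)$. The rewriting given inside the proof of Theorem 3.1 shows that $\mathcal{Q}_{h}^{\omega,m}(f)=\int_{-\infty}^{\infty}\Phi(t)\,e^{itD}f\,dt$ with $\Phi(t)=\sum_{j=1}^{m}b_{j}(\omega/j)\,h(t\omega/j)$, and $\Phi$ is an entire function of exponential type $\omega$ in $L_{1}(\mathbb{R})$. The opening $L_{1}$-kernel argument of Theorem 3.1 then produces the Bernstein bound $\|D^{k}\mathcal{Q}_{h}^{\omega,m}(f)\|\leq \omega^{k}\|f\|\,\|\Phi\|_{L_{1}}$, uniformly in $k\in\mathbb{N}$, so by part (3) of Theorem 2.1 the vector $\mathcal{Q}_{h}^{\omega,m}(f)$ lies in $PW_{\omega}(D)$. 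Because $\mathcal{E}(f,\omega)$ is the infimum of $\|f-g\|$ over $g\in PW_{\omega}(D)$, this immediately yields the left-hand inequality $\mathcal{E}(f,\omega)\leq \|\mathcal{Q}_{h}^{\omega,m}(f)-f\|$.

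For the right-hand inequality I would simply re-assemble the chain of estimates that concluded the proof of Theorem 3.1. From identity (\ref{id}) one has $f-\mathcal{Q}_{h}^{\omega,m}(f)=\int_{-\infty}^{\infty}h(t)\Delta^{m}_{t/\omega}f\,dt$, so taking norms under the integral and applying (\ref{3.14}) to shift $k$ powers of $D$ onto $f$, together with (\ref{3.15}) to absorb the dilation in the modulus of continuity, produces the bound
\[
\|f-\mathcal{Q}_{h}^{\omega,m}(f)\|\leq \frac{\Omega_{m-k}(D^{k}f,1/\omega)}{\omega^{k}}\int_{-\infty}^{\infty}h(t)|t|^{k}(1+|t|)^{m-k}\,dt.
\]
Majorizing $(1+|t|)^{m-k}$ by $(1+|t|)^{m}$ gives exactly the constant $C^{h}_{m,k}$ of the statement, and the integral is finite by the third property of $h$ listed in Theorem 3.1.

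The only genuinely substantive step is the Bernstein verification placing $\mathcal{Q}_{h}^{\omega,m}(f)$ into $PW_{\omega}(D)$; once that is in hand, both inequalities follow from estimates already on the page. The mild weakening of the exponent from $(1+|t|)^{m-k}$ to $(1+|t|)^{m}$ is harmless and merely provides a single uniform constant valid for every admissible pair $(k,m)$.
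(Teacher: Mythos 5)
Your proposal is correct and matches the paper's intent exactly: the paper offers no separate proof of the corollary, remarking only that it was already established in the proof of the preceding theorem, and your reassembly of that proof (the $L_{1}$-kernel/Bernstein argument placing $\mathcal{Q}_{h}^{\omega,m}(f)$ in $PW_{\omega}(D)$, followed by the chain of estimates using (\ref{3.14}) and (\ref{3.15})) is precisely what the paper relies on. Your observation that the constant in the corollary uses $(1+|t|)^{m}$ rather than the $(1+|t|)^{m-k}$ appearing in the theorem's proof, and that this is a harmless majorization, is an accurate reading of the small discrepancy between the two statements.
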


Next, we are going to obtain the Inverse Approximation Theorem in
the case $q=\infty$.

\begin{lemma}
If there exist $r>\alpha-n>0, \alpha>0, r,n\in \mathbb{N},$ such
that the quantity
\begin{equation}
\mathbf{b}^{\alpha}_{\infty,n,r}(f)=
\sup_{s>0}\left(s^{n-\alpha}\Omega_{r}\left(D^{n}f,s\right)\right)\label{H}
\end{equation}
 is finite, then there exists a constant $A=A(n,r)$ for
 which
\begin{equation}
 \sup_{s>0}s^{\alpha}\mathcal{E}(f,s)\leq
A(n,r)\mathbf{b}^{\alpha}_{\infty,n,r}(f).
\end{equation}
\end{lemma}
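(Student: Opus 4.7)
The plan is to derive this bound directly from the Jackson-type inequality (\ref{J}) established in the preceding theorem. Specifically, I apply that inequality with the parameter choices $m = n+r$ and $k = n$, which meets the constraint $0\le k\le m$ because $r\ge 1$, yielding
$$\mathcal{E}(f,\omega)\;\le\;\frac{C}{\omega^{n}}\,\Omega_{r}\!\left(D^{n}f,\,1/\omega\right).$$
Multiplying both sides by $\omega^{\alpha}$, rewriting $\omega^{\alpha-n}=(1/\omega)^{n-\alpha}$, and setting $s=1/\omega$ converts this to
$$\omega^{\alpha}\,\mathcal{E}(f,\omega)\;\le\;C\,s^{\,n-\alpha}\,\Omega_{r}\!\left(D^{n}f,\,s\right)\;\le\;C\,\mathbf{b}^{\alpha}_{\infty,n,r}(f).$$
Taking the supremum over $\omega>0$ on the left then gives the claimed inequality with $A(n,r)=C^{h}_{\,n+r,\,n}$ supplied by the Corollary just above.

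In short, the lemma is essentially a reparametrization of the direct theorem in the $(n,r,\alpha)$-notation suited to Besov smoothness. The hypothesis $r>\alpha-n>0$ plays the role of a compatibility condition that makes the quantity $\mathbf{b}^{\alpha}_{\infty,n,r}(f)$ meaningful at smoothness level $\alpha$: since $\alpha>n$ forces the weight $s^{n-\alpha}$ to blow up as $s\to 0^{+}$, and $\Omega_{r}(D^{n}f,s)$ typically behaves like $s^{r}$ near zero via identity (\ref{id3}), one needs $r+n-\alpha>0$ for the weighted quantity to be finite; for large $s$, boundedness of $\Omega_{r}$ by $2^{r}\|D^{n}f\|$ is dominated by the decay of $s^{n-\alpha}$, and for small $\omega$ the trivial bound $\mathcal{E}(f,\omega)\le\|f\|$ makes $\omega^{\alpha}\mathcal{E}(f,\omega)$ vanish, so the supremum is achieved in the Jackson regime.

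Because all the hard analytic work was already absorbed into proving the Jackson inequality (construction of the extremal entire function $h$, the classical Bernstein argument, and the scaling estimates (\ref{3.14})--(\ref{3.15})), no substantive obstacle arises here; the main content is simply recognizing that the same Jackson bound, once rewritten, furnishes the inverse-type estimate needed for the $q=\infty$ case of the Besov characterization in Theorem~\ref{maintheorem1}. The only care required is to match indices so that the exponent $n-\alpha$ on the $s$-side aligns with the defining weight in $\mathbf{b}^{\alpha}_{\infty,n,r}(f)$, which is ensured by the specific choice $m=n+r$, $k=n$.
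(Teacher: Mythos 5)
Your proof is correct and coincides with the paper's own argument: both apply the Jackson inequality (\ref{J1}) with $m=n+r$, $k=n$, bound $\Omega_{r}(D^{n}f,1/\omega)$ by $\mathbf{b}^{\alpha}_{\infty,n,r}(f)\,\omega^{n-\alpha}$ using the hypothesis, and take the supremum, arriving at the same constant $A(n,r)=C^{h}_{n+r,n}$.
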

\begin{proof}
 Assume that (\ref{H}) holds, then
$$
\Omega_{r}\left(D^{n}f,s\right)\leq
\mathbf{b}^{\alpha}_{\infty,n,r}(f)s^{\alpha-n}
$$
and  (\ref{J1}) implies
\begin{equation}
\mathcal{E}(f,s)\leq
C_{n+r,n}^{h}s^{-n}\mathbf{b}^{\alpha}_{\infty,n,r}(f)
s^{n-\alpha}=
$$
$$
A(n,r)\mathbf{b}^{\alpha}_{\infty,n,r}(f)s^{-\alpha}.
\end{equation}
Lemma is proved.
\end{proof}
\begin{lemma}
If for an $f\in {\mathcal H}$ and for an $\alpha>0$ the following
upper bound is finite
\begin{equation}
 \sup_{s>0}s^{\alpha} \mathcal{E}(f,s)=
T(f,\alpha)<\infty,
\end{equation}
then for every $r>\alpha-n>0, \alpha>0, r,n\in \mathbb{N},$ there
exists a constant $C(\alpha,n,r)$ such that the next inequality
holds
\begin{equation}
\mathbf{b}^{\alpha}_{\infty,n,r}(f) \leq
C(\alpha,n,r)\left(\|f\|+T(f,\alpha)\right).
\end{equation}\label{Nik}

\end{lemma}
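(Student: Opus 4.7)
My plan is to follow the classical Bernstein-type inverse theorem, translated into the self-adjoint operator language by combining the Bernstein inequality $\|D^{s}g\|\leq \omega^{s}\|g\|$ for $g\in PW_{\omega}(D)$ (item (3) of the theorem in Section 2) with the smoothing identity (\ref{id3}). The hypothesis gives polynomial decay $\mathcal{E}(f,s)\leq T(f,\alpha)s^{-\alpha}$, and the task is to convert this into the quantitative smoothness statement $\mathbf{b}^{\alpha}_{\infty,n,r}(f)\leq C(\|f\|+T(f,\alpha))$.

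\textbf{Dyadic resolution and existence of $D^{n}f$.} For each $k\geq 0$ I pick a near-best approximation $g_{k}\in PW_{2^{k}}(D)$ with $\|f-g_{k}\|\leq 2\mathcal{E}(f,2^{k})\leq 2T(f,\alpha)2^{-k\alpha}$. Setting $\delta_{0}=g_{0}$ and $\delta_{k}=g_{k}-g_{k-1}\in PW_{2^{k}}(D)$ for $k\geq 1$, the telescoping series $f=\sum_{k\geq 0}\delta_{k}$ converges in $\mathcal{H}$ (since $\alpha>0$), with $\|\delta_{k}\|\leq CT(f,\alpha)2^{-k\alpha}$ for $k\geq 1$ and $\|\delta_{0}\|\leq \|f\|+2T(f,\alpha)$. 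Bernstein then gives $\|D^{n}\delta_{k}\|\leq 2^{kn}\|\delta_{k}\|\leq CT(f,\alpha)2^{k(n-\alpha)}$, and since $n-\alpha<0$ the series $\sum_{k}D^{n}\delta_{k}$ is absolutely summable; by closedness of $D^{n}$, $f\in \mathcal{D}_{n}$ with $\|D^{n}f\|\leq C(\|f\|+T(f,\alpha))$, so $\Omega_{r}(D^{n}f,\cdot)$ is well defined.

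\textbf{Controlling $\Omega_{r}(D^{n}f,s)$.} Using the subadditivity $\Omega_{r}(D^{n}f,s)\leq \sum_{k\geq 0}\Omega_{r}(D^{n}\delta_{k},s)$, I estimate each piece two ways: the trivial $\Omega_{r}(D^{n}\delta_{k},s)\leq 2^{r}\|D^{n}\delta_{k}\|\leq CT(f,\alpha)2^{k(n-\alpha)}$, and, via identity (\ref{id3}) applied to $D^{n}\delta_{k}$ together with Bernstein, the smooth $\Omega_{r}(D^{n}\delta_{k},s)\leq s^{r}\|D^{n+r}\delta_{k}\|\leq CT(f,\alpha)s^{r}2^{k(n+r-\alpha)}$. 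For $s\in(0,1]$ I split the sum at $k^{*}\sim\log_{2}(1/s)$: the smooth bound for $k\leq k^{*}$, the trivial one for $k>k^{*}$. Because the exponents $n+r-\alpha$ and $n-\alpha$ have opposite signs (exactly the two-sided condition $0<\alpha-n<r$), each partial geometric sum is dominated by its boundary term and evaluates to $O(s^{\alpha-n}T(f,\alpha))$; the $k=0$ contribution $s^{r}\|\delta_{0}\|\leq s^{\alpha-n}(\|f\|+2T(f,\alpha))$ behaves the same way since $s\leq 1$ and $r>\alpha-n$. Multiplying by $s^{n-\alpha}$ gives the required supremum bound on $(0,1]$, and for $s\geq 1$ the estimate $\Omega_{r}(D^{n}f,s)\leq 2^{r}\|D^{n}f\|$ together with $s^{n-\alpha}\leq 1$ finishes.

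The main technical obstacle is the dyadic bookkeeping in the last step and the correct choice of split threshold: the two-sided parameter constraint $0<\alpha-n<r$ is exactly what makes both partial sums collapse to the single target power $s^{\alpha-n}$. If either inequality were to fail, one of the geometric sums would diverge or would give the wrong exponent in $s$, and the argument would break down.
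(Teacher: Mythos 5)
Your proposal is correct and follows essentially the same route as the paper: a dyadic telescoping decomposition $f=\sum_k\delta_k$ with $\delta_k\in PW_{a^k}(D)$ built from near-best approximants, Bernstein's inequality plus closedness of $D^{n}$ to get $f\in\mathcal{D}_{n}$, and then a two-regime estimate of $\Omega_{r}(D^{n}f,s)$ (the $(a^{k}s)^{r}$ bound from identity (\ref{id3}) for low frequencies, the trivial $2^{r}$ bound for high frequencies) split at $a^{k}\sim 1/s$, with the condition $0<\alpha-n<r$ making both geometric sums collapse to $s^{\alpha-n}$. The only cosmetic differences are your choice $a=2$ and your explicit (and harmless) treatment of the range $s\geq 1$, which the paper leaves implicit.
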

\begin{proof}
The assumption implies that for a given $f\in {\mathcal H}$ and a
sequence of numbers $a^{j},a>1, j=0,1,2,...$ one can find a
sequence $g_{j}\in PW_{a^{j}}(D)$ such that
$$
\|f-g_{j}\|\leq T(f,\alpha) a^{-j\alpha}, a>1.
$$
Then for \begin{equation} f_{0}=g_{0}, f_{j}=g_{j}-g_{j-1}\in
PW_{a^{j}}(D),\label{N1}
\end{equation}
 the series
\begin{equation}
f=f_{0}+f_{1}+f_{2}+....\label{N2}
\end{equation}
converges in ${\mathcal H}$. Moreover, we have the following estimates
$$
\|f_{0}\|=\|g_{0}\|\leq\|g_{0}-f\|+\|f\|\leq\|f\|+T(f,\alpha),
$$
$$
\|f_{j}\|\leq \|f-g_{j}\|+\|f-g_{j-1}\|\leq
T(f,\alpha)a^{-j\alpha}+T(f,\alpha)
a^{-(j-1)\alpha}=T(f,\alpha)(1+a^{\alpha})a^{-j\alpha},
$$
which imply the following inequality
\begin{equation}
\|f_{j}\|\leq C(a,\alpha)
a^{-j\alpha}\left(\|f\|+T(f,\alpha)\right), j\in
\mathbb{N}.\label{3.25}
\end{equation}
Since $f_{j}\in PW_{a^{j}}(D)$ we have for any $n\in \mathbb{N}$
\begin{equation}
\|D^{n}f_{j}\|\leq a^{jn}\|f_{j}\|, a>1,\label{3.26}
\end{equation}
we obtain
$$
\|D^{n}f_{j}\|\leq C(a,\alpha)
a^{-j(\alpha-n)}\left(\|f\|+T(f,\alpha)\right)
$$
 which shows that the series
$$
\sum_{j\in \mathbb{N}}D^{n}f_{j}
$$
converges in ${\mathcal H}$ and because the operator $D^{n}$ is closed
the sum $f$ of this series belongs to the domain of $D^{n}$ and
$$
D^{n}f=\sum_{j\in \mathbb{N}}D^{n}f_{j}.
$$
Next, let $F_{j}=D^{n}f_{j}$ then we have  that $
D^{n}f=\sum_{j}F_{j}, $ where $F_{j}\in PW_{a^{j}}(D)$ and
according to (\ref{3.25}) and (\ref{3.26})
\begin{equation}
\|F_{j}\|=\|D^{n}f_{j}\|\leq a^{jn}\|f_{j}\|\leq
C(a,\alpha)a^{-j(\alpha-n)}(\|f\|+T(f,\alpha)).\label{F}
\end{equation}
Pick a positive $t$ and
a natural $N$ such that
\begin{equation}
a^{-N}\leq t<a^{-N+1},a>1,\label{3.28}
\end{equation}
then we obviously have the following formula for any natural $r$
\begin{equation}
\Delta_{t}^{r}D^{n}f=\sum_{j=0}^{N-1}\Delta_{t}^{r}F_{j}+
\sum_{j=N}^{\infty}\Delta_{t}^{r}F_{j},
\end{equation}
where $\Delta_{t}^{r}$ is defined in (\ref{Dif}). Note, that the
Bernstein inequality and the formula (\ref{id3}) imply that if
$f\in PW_{\omega}(D)$, then
\begin{equation}
\|\Delta_{t}^{r}f\|\leq (t\omega)^{r}\|f\|.
\end{equation}
Since (\ref{F}) and (\ref{3.28}) hold  we obtain for $j\leq N-1$
the following inequalities
$$
\|\Delta_{t}^{r}F_{j}\|\leq (a^{j}t)^{r}\|F_{j}\|\leq
C(a,\alpha)(\|f\|+T(f,\alpha))a^{j(n+r-\alpha)-(N-1)r)}, a>1.
$$
These inequalities imply
\begin{equation}
\left\|\sum_{j=0}^{N-1}\Delta_{t}^{r}F_{j}\right\|\leq
C(a,\alpha)(\|f\|+T(f,\alpha))a^{-r(N-1)}\sum_{j=0}^{N-1}a^{(n+r-\alpha)j}=
$$
$$
C(a,\alpha)(\|f\|+T(f,\alpha))a^{-r(N-1)}\frac{1-a^{(n+r-\alpha)N}}{1-a^{(n+r-\alpha)}}\leq
$$
$$
C(a,\alpha,n,r)(\|f\|+T(f,\alpha))t^{\alpha-n}.
\end{equation}
By applying the following inequality
$$
\|\Delta_{t}^{r}F_{j}\|\leq 2^{r}\|F_{j}\|
$$
to terms with $j\geq N$ we can continue our estimation as
\begin{equation}
\left\|\sum_{j=N}^{\infty}\Delta_{t}^{r}F_{j}\right\|\leq
2^{r}C(a,\alpha)(\|f\|+T(f,\alpha))\sum_{j=N}^{\infty}a^{-(\alpha-n)j}=
$$
$$
C(a,\alpha)
2^{r}(\|f\|+T(f,\alpha))a^{-N(\alpha-n)}(1-a^{(n-\alpha)})^{-1}\leq
$$
$$
C(a,\alpha,n,r)(\|f\|+T(f,\alpha))t^{\alpha-n}.
\end{equation}
It gives the following inequality
$$
\|\Delta_{t}^{r}D^{n}f\|\leq
C(a,\alpha,n,r)t^{\alpha-n}(\|f\|+T(f,\alpha)),
$$
from which one has
$$
\Omega_{r}\left(D^{n}f, s\right)\leq
C(a,\alpha,n,r)(\|f\|+T(f,\alpha))s^{\alpha-n}, s>0,
$$
and
$$
\mathbf{b}^{\alpha}_{\infty,n,r}(f)\leq
C(a,\alpha,n,r)(\|f\|+T(f,\alpha)).
$$
The Lemma is proved.
\end{proof}
Our main result concerning spaces
$\mathbf{B}_{2,\infty}^{\alpha}(D), \alpha>0,$ is the following.
\begin{theorem}The norm of the space $\mathbf{B}_{2,\infty}^{\alpha}D), \alpha>0,$ is
equivalent to the following norms
\begin{equation}
\|f\|+\sup_{s>0}\left(s^{\alpha }\mathcal{E}(f,
s)\right),\label{3.33}
\end{equation}
\begin{equation}
\|f\|+\sup_{s>0}\left(s^{\alpha}\mathcal{R}(f,
s))\right),\label{3.34}
\end{equation}
\begin{equation}
\|f\|+\sup_{k\in \mathbb{N}}\left(a^{k\alpha }\mathcal{E}(f,
a^{k})\right),a>1,
\end{equation}
\begin{equation}
\|f\|+\sup_{k\in \mathbb{N}}\left(a^{k\alpha
}\mathcal{R}(f,a^{k}))\right), a>1.\label{3.36}
\end{equation}
Moreover, a vector $f\in \mathcal{H}$ belongs to $
\mathbf{B}_{2,\infty}^{\alpha}(D), \alpha>0,$ if and only if there
exists a sequence of vectors $f_{k}=f_{k}(f)\in PW_{a^{k}}(D),
a>1,$ such that the series $\sum f_{k}$ converges to $f$ in
${\mathcal H}$ and
\begin{equation}
c_{1}\|f\|_{\mathbf{B}_{2,\infty}^{\alpha}(D)}\leq \sup_{k\in
\mathbb{N}}\left(a^{k\alpha }\|f_{k}\|\right)\leq
c_{2}\|f\|_{\mathbf{B}_{2,\infty}^{\alpha}(D)},a>1,
\end{equation}\label{EN}
for certain $c_{1}=c_{1}(D,\alpha), c_{2}=c_{2}(D,\alpha)$
which are independent of $f\in \mathbf{B}_{2,\infty}^{\alpha}(D)$.
\end{theorem}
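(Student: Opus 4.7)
The plan is to assemble the theorem from three inputs already available: the two Lemmas of this section, the Plancherel identity (\ref{decayFT}) giving $\mathcal{R}(f,\cdot)=\mathcal{E}(f,\cdot)$, and a Peetre-type identification of the $K$-functor norm (\ref{Besov norm}) with a norm built from the Schr\"odinger-group modulus of continuity $\Omega_r$.

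First I would establish that for any $r,n\in\mathbb{N}$ with $r>\alpha-n>0$ one has an equivalence
$$
\|f\|_{\mathbf{B}_{2,\infty}^{\alpha}(D)}\;\sim\;\|f\|+\mathbf{b}^{\alpha}_{\infty,n,r}(f),
$$
where $\mathbf{b}^{\alpha}_{\infty,n,r}(f)$ is the quantity (\ref{H}). This is the standard identification of Peetre's $K$-functional on the couple $(\mathcal{H},\mathcal{D}_{r/2})$ with a modulus of continuity built from a one-parameter group whose generator is $D$: the upper estimate $\Omega_r(f,s)\leq C\,K(s^r,f;\mathcal{H},\mathcal{D}_{r/2})$ follows from $\Omega_r(g,s)\leq 2^r\|g\|$ and, via (\ref{id3}), from $\Omega_r(g,s)\leq s^r\|D^r g\|$ for $g\in\mathcal{D}_r$; the reverse bound is obtained by taking the Paley--Wiener mollifier $\mathcal{Q}_h^{1/s,r}(f)$ from the proof of Theorem 3.1 as the distinguished decomposition of $f$ realizing the $K$-functional. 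Applying the two Lemmas of this section then gives
$$
T(f,\alpha):=\sup_{s>0}s^{\alpha}\mathcal{E}(f,s)\;\leq\;A(n,r)\,\mathbf{b}^{\alpha}_{\infty,n,r}(f)
$$
and
$$
\mathbf{b}^{\alpha}_{\infty,n,r}(f)\;\leq\;C(\alpha,n,r)\bigl(\|f\|+T(f,\alpha)\bigr),
$$
which combined with the first step yields the equivalence of the Besov norm with (\ref{3.33}), and hence with (\ref{3.34}) via (\ref{decayFT}).

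To pass from the continuous to the discrete suprema in (\ref{3.36}), I would use that both $\mathcal{E}(f,\cdot)$ and $\mathcal{R}(f,\cdot)$ are non-increasing, so for $s\in[a^k,a^{k+1}]$ one has $s^{\alpha}\mathcal{E}(f,s)\leq a^{\alpha}\cdot a^{k\alpha}\mathcal{E}(f,a^k)$; the range $s\in(0,1)$ is absorbed into $\|f\|$ via $\mathcal{E}(f,s)\leq\|f\|$. For the decomposition characterization (\ref{EN}) I would follow the construction (\ref{N1})--(\ref{N2}) of Lemma \ref{Nik}: given $f$ with finite Besov norm, pick $g_k\in PW_{a^k}(D)$ with $\|f-g_k\|\leq 2\mathcal{E}(f,a^k)\leq 2T(f,\alpha)a^{-k\alpha}$, set $f_0=g_0$ and $f_k=g_k-g_{k-1}\in PW_{a^k}(D)$, and read off $\sup_k a^{k\alpha}\|f_k\|\leq c_2\|f\|_{\mathbf{B}_{2,\infty}^{\alpha}(D)}$ from (\ref{3.25}). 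Conversely, given $f=\sum_j f_j$ with $f_j\in PW_{a^j}(D)$ and $M:=\sup_j a^{j\alpha}\|f_j\|<\infty$, the partial sum $g_k=\sum_{j\leq k}f_j$ lies in $PW_{a^k}(D)$ and
$$
\mathcal{E}(f,a^k)\;\leq\;\sum_{j>k}\|f_j\|\;\leq\; M\sum_{j>k}a^{-j\alpha}\;\leq\; C_a\, M\, a^{-k\alpha},
$$
so the discrete equivalence yields $\|f\|_{\mathbf{B}_{2,\infty}^{\alpha}(D)}\leq C(\|f\|+M)$.

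The principal obstacle is the first step: the precise comparability of the abstract $K$-functor norm with the Schr\"odinger-group modulus-of-continuity norm. The upper estimate is essentially (\ref{id3}); the delicate direction is the lower bound, which requires, at each scale, producing a Paley--Wiener vector realizing the $K$-functional --- a role played here by the mollifier $\mathcal{Q}_h^{1/s,r}$ from the proof of Theorem 3.1. Everything else is routine interpolation-theoretic and telescoping-series assembly.
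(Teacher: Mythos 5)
Your proposal is correct and follows essentially the same route as the paper: the norm equivalences come from combining the two Lemmas with (\ref{decayFT}), and the decomposition characterization reuses the telescoping construction (\ref{N1})--(\ref{N2}) with the estimate (\ref{3.25}) in one direction and the tail bound $\mathcal{E}(f,a^{N})\leq\sum_{k\geq N}\|f_{k}\|$ in the other. The only substantive difference is that you explicitly flag and sketch the identification of the $K$-functor norm with the modulus-of-continuity seminorm $\mathbf{b}^{\alpha}_{\infty,n,r}$, a step the paper uses silently (importing it from the cited references rather than proving it), so your write-up is, if anything, more complete on the one genuinely non-routine ingredient.
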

\begin{proof}
That the norm of $ \mathbf{B}_{2,\infty}^{\alpha}(D), \alpha>0,$
is equivalent to any of the norms (\ref{3.33})-(\ref{3.36})
follows from the last two Lemmas and (\ref{decayFT}).

 Next, if the
norm (\ref{3.33}) is finite then it was shown in the proof of the
last Lemma that there exists a sequence of vectors
$f_{k}=f_{k}(f)\in PW_{a^{k}}(D), a>1,$ such that the series $\sum
f_{k}$ converges to $f$ in ${\mathcal H}$. Moreover, the inequality
(\ref{3.25}) shows existence of constant $c$ which is independent
of $f\in \mathbf{B}_{2,\infty}^{\alpha}(D)$ for which the following inequality holds
$$
\sup_{k\in \mathbb{N}}\left(a^{k\alpha }\|f_{k}\|\right)\leq
c\|f\|_{\mathbf{B}_{2,\infty}^{\alpha}(D)},a>1,
$$
Conversely, let us assume that there exists a sequence of
vectors 
$f_{k}=f_{k}(f)\in PW_{a^{k}}(D), a>1,$ such that the
series $\sum f_{k}$ converges to $f$ in ${\mathcal H}$ and
$$
\sup_{k\in \mathbb{N}}\left(a^{k\alpha }\|f_{k}\|\right)<\infty.
$$
We have
$$
\mathcal{E}(f, a^{N})\leq
\left\|f-\sum_{k=0}^{N-1}f_{k}\right\|=\sum_{k=N}^{\infty}\|f_{k}\|
\leq\sup_{k\in \mathbb{N}}\left(a^{k\alpha
}\|f_{k}\|\right)\sum_{k=N}^{\infty}a^{-\alpha j}\leq
$$
$$C\sup_{k\in \mathbb{N}}\left(a^{k\alpha
}\|f_{k}\|\right)a^{-N\alpha},
$$
or
$$
\sup_{N}a^{N\alpha}\mathcal{E}(f, a^{N})\leq C\sup_{k\in
\mathbb{N}}\left(a^{k\alpha }\|f_{k}\|\right).
$$
Since we also have
$$
\|f\|\leq \sum_{k}\|f_{k}\|\leq \sup_{k\in
\mathbb{N}}\left(a^{k\alpha }\|f_{k}\|\right)\sum_{k}a^{-\alpha
k}, a>1,
$$
the Theorem is proved.
\end{proof}
The Theorems \ref{maintheorem1} and  \ref{maintheorem2} from
Introduction are  extensions of the Theorem \ref{EN} to all indices
$1\leq q\leq \infty$. Their proofs go essentially along the same
lines as the proof of the last Theorem and are omitted.

 \makeatletter
\renewcommand{\@biblabel}[1]{\hfill#1.}\makeatother

   \end{document}